\theoremstyle{plain}
\newtheorem{theorem}{Theorem}
\newtheorem{proposition}[theorem]{Proposition}
\newtheorem{lemma}[theorem]{Lemma}
\theoremstyle{definition}
\newcommand{\CC}{\mathbb{C}}
\newcommand{\A}{\mathcal{A}}
\newcommand{\B}{\mathcal{B}}
\newcommand{\V}{\mathcal{V}}
\newcommand{\diff}{\operatorname{d}}
\newcommand{\euler}{\operatorname{\chi}}
\newcommand{\HH}{\operatorname{H}}
\newcommand{\Ker}{\operatorname{Ker}}
\newcommand{\pr}{\operatorname{pr}}
\newcommand{\Spec}{\operatorname{Spec}}
\newcommand{\T}{\operatorname{T}}
\newcommand{\Ball}{\mathrm{B}}
\newcommand{\eucl}{\mathrm{eucl}}
\newcommand{\Ga}{\mathrm{G}_{\mathrm{a}}}
\newcommand{\id}{\mathrm{id}}
\newcommand{\red}{\mathrm{red}}
\newcommand{\U}{\mathrm{U}}
\newcommand{\var}{\mathrm{var}}
\newcommand{\lra}{\longrightarrow}
\begin{document}

\title[The Effect on Topology of the Action of a Unipotent Group]
{The Effect on Topology of the Action of a Unipotent Group}

\author{Mario Maican}
\address{Institute of Mathematics of the Romanian Academy, Calea Grivitei 21, Bucharest 010702, Romania}

\email{maican@imar.ro}

\begin{abstract}
Assume that two algebraic varieties of finite type over the complex numbers
are related by a morphism whose fibers are precisely the orbits for the action of a unipotent group.
We show that the two varieties have the same topological Euler characteristic.
If they are smooth and the morphism is smooth,
we show that the two varieties have the same cohomology groups.
\end{abstract}

\subjclass[2010]{Primary 14L30, 55N10 Secondary 20G20}
\keywords{Unipotent groups, Topological Euler characteristic, Group actions}

\maketitle

We recall that an \emph{algebraic variety} $X$ over $\CC$ is the set of closed points of a scheme $S$ over $\CC$
together with the restricted sheaf $\mathcal{O}_S^{} |_X^{}$.
Let $G$ be a unipotent linear algebraic group over $\CC$
acting algebraically on a separated algebraic variety $X$ of finite type over $\CC$.
In algebraic geometry, there often arises the need to construct a suitable quotient,
for instance, a geometric quotient, of $X$ modulo $G$.
This problem is not fully understood, in spite of the fact that many particular cases have been investigated.
In this paper we focus on a simpler task.
We assume that we are given a priori a separated variety $Y$ of finite type over $\CC$ parametrizing the $G$-orbits in $X$.
By this we mean the existence of a surjective morphism of algebraic varieties $f \colon X \to Y$,
whose fibers, viewed as reduced subvarieties of $X$, are precisely the $G$-orbits.
We will prove that the Euclidean topologies $X_\eucl^{}$ and $Y_\eucl^{}$ have the same Euler characteristic, see Proposition~\ref{euler}.
If $X$, $Y$ and $f$ are smooth, we will prove that $X_\eucl^{}$ and $Y_\eucl^{}$ have the same cohomology groups, see Proposition~\ref{cohomology}.
Lemmas~\ref{leray}, \ref{lifting} and \ref{contraction} are well-known, but are included here for the sake of completeness.

\begin{lemma}
\label{leray}
Let $X$ and $Y$ be differentiable manifolds that are Hausdorff and have a countable base of open sets.
Let $A$ be an abelian group.
Let $f \colon X \to Y$ be a continuous surjective map.
We assume that $Y$ has a base $\B$ for its topology such that $f^{-1} B$ is acyclic for each $B \in \B$.
Then, for all integers $p \ge 0$, $f$ induces an isomorphism of groups
\[
\HH^p(Y; A) \overset{\simeq}{\lra} \HH^p(X; A).
\]
\end{lemma}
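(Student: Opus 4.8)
The plan is to compute the higher direct images of the constant sheaf along $f$ and feed them into the Leray spectral sequence. First I would pass from singular to sheaf cohomology: since $X$ and $Y$, and hence every open subset of either, are Hausdorff, second countable and locally contractible, their singular cohomology with coefficients in $A$ agrees canonically and functorially with the cohomology of the constant sheaf $\underline{A}$ (the sheaf with stalk $A$). In these terms the hypothesis reads: for every $B \in \B$ one has $\HH^q(f^{-1}B; \underline{A}) = 0$ for $q > 0$, while the canonical map $A \to \HH^0(f^{-1}B; \underline{A})$ is an isomorphism --- the latter because an acyclic space is nonempty and connected, hence so is its image $B = f(f^{-1}B)$.

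Next I would use the standard description of $\operatorname{R}^q f_* \underline{A}$ as the sheaf on $Y$ associated to the presheaf $U \mapsto \HH^q(f^{-1}U; \underline{A})$, so that its stalk at a point $y$ is the direct limit of $\HH^q(f^{-1}U; \underline{A})$ over the open neighbourhoods $U$ of $y$. Since $\B$ is a base, the sets $B \in \B$ containing $y$ are cofinal among those neighbourhoods, so the limit may be computed along them; by the previous step it then vanishes for $q > 0$ and equals $A$ for $q = 0$, with transition maps the identity. Hence $\operatorname{R}^q f_* \underline{A} = 0$ for $q > 0$, and the adjunction unit $\underline{A}_Y \to f_* \underline{A}$ is an isomorphism on stalks, so $f_* \underline{A} \cong \underline{A}_Y$.

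Finally I would invoke the Leray spectral sequence $\operatorname{E}_2^{p,q} = \HH^p(Y; \operatorname{R}^q f_* \underline{A}) \Rightarrow \HH^{p+q}(X; \underline{A})$, which exists for any continuous map because $f_*$ has the exact left adjoint $f^{-1}$ and therefore preserves injective sheaves. By the preceding computation only the bottom row survives, $\operatorname{E}_2^{p,0} = \HH^p(Y; \underline{A})$, so the spectral sequence degenerates and its edge homomorphism is an isomorphism $\HH^p(Y; \underline{A}) \xrightarrow{\simeq} \HH^p(X; \underline{A})$; unwinding that edge homomorphism identifies it with the pullback $f^*$. Translating back through the singular-to-sheaf comparison gives the assertion.

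The argument has no single hard point; the real work is organisational. One must check that the singular/sheaf comparison is available uniformly --- this is precisely why second countability, hence paracompactness, is built into the hypotheses --- and one must verify that the isomorphism produced by the degenerate spectral sequence is genuinely induced by $f$ rather than merely abstract. The place where the acyclicity hypothesis is actually consumed is the cofinality remark, which lets it be read off at the level of stalks even though $\B$ is not assumed to be closed under finite intersections, an assumption a naive \v{C}ech argument would seem to require.
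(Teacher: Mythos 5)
Your argument is correct and follows essentially the same route as the paper: identify singular with sheaf cohomology of the constant sheaf, use the acyclicity of the sets $f^{-1}B$ to kill the higher direct images $\mathrm{R}^q f_* \underline{A}$ and to identify $f_*\underline{A}$ with $\underline{A}_Y$, and conclude by degeneration of the Leray spectral sequence. In fact you supply slightly more detail than the paper does at the stalkwise computation and at the identification $\underline{A}_Y \simeq f_*\underline{A}$, which the paper simply asserts.
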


\begin{proof}
Let $\A_X^{}$ and $\A_Y^{}$ denote the sheaves of locally constant functions on $X$, respectively, $Y$, with values in $A$.
By hypothesis, for any integer $q \ge 1$ and $B \in \B$, we have
\[
\HH^q( f^{-1} B; \A_X^{} |_{f^{-1} B}^{}) \simeq \HH^q(f^{-1} B; A) = \{ 0 \}.
\]
It follows that $\mathrm{R}^q f_{*}^{}(\A_X^{}) = \{ 0 \}$ for $q \ge 1$.
The Leray spectral sequence of $\A_X^{}$ degenerates at its second page
\[
\mathrm{E}_2^{pq} = \HH^p(Y; \mathrm{R}^q f_{*}^{}(\A_X^{})).
\]
Thus, for each $p \ge 0$, Leray spectral sequence of $\A_X^{}$ provides an isomorphism
\[
\HH^p(Y; f_{*}^{} \A_X^{}) \overset{\simeq}{\lra} \HH^p(X; \A_X^{}).
\]
But $f^{\sharp} \colon \A_Y^{} \to f_{*}^{} \A_X^{}$ is an isomorphism, hence, for all $p \ge 0$, the natural maps
\[
\HH^p(Y; A) \lra \HH^p(Y; \A_Y^{}) \lra \HH^p(X; \A_X^{}) \lra \HH^p(X; A)
\]
are isomorphisms.
\end{proof}

\noindent
The next lemma concerns a Lie group $G$ and a closed subgroup $H \le G$.
The canonical map $\pi \colon G \to G/H$ is submersive,
so we can define the vertical tangent bundle $\V = \Ker(\diff \pi) \subset \T G$.
Choose a Riemannian scalar product $\langle \, , \rangle_G^{}$ on $\T G$ that is invariant under right translations.
Let $\mathcal{H} \subset \T G$ be the orthogonal complement of $\V$ relative to $\langle \, , \rangle_G^{}$.
Given $h \in H$, both $\V$ and $\langle \, , \rangle_G^{}$ are invariant under right multiplication by $h$,
hence $\mathcal{H}$ is invariant under right multiplication by $h$, as well.

\begin{lemma}
\label{lifting}
Let $\gamma \colon [0, 1] \to G/H$ be a smooth path starting at $\gamma(0) = x$.
Consider a point $z \in \pi^{-1}(x)$.
Then there exists a unique smooth lifting $\tilde{\gamma} \colon [0, 1] \to G$ of $\gamma$,
starting at $\tilde{\gamma}(0) = z$, which is tangent to $\mathcal{H}$ at every point.
\end{lemma}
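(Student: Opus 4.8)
The plan is to recognize the statement as the standard existence-and-uniqueness theorem for horizontal lifts of paths in the principal $H$-bundle $\pi\colon G\to G/H$ carrying the connection whose horizontal distribution is $\mathcal{H}$, and to prove it by solving an ordinary differential equation, using completeness of a homogeneous Riemannian metric to see that the solution survives on all of $[0,1]$.

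First I would set up the ODE. Since $\pi$ is a submersion and $G/H$ is Hausdorff, the fiber product $E:=\{(t,g)\in[0,1]\times G:\pi(g)=\gamma(t)\}$ is a closed smooth submanifold of $[0,1]\times G$ and the projection $E\to[0,1]$ is a surjective submersion. For each $g\in G$ the restriction $\diff\pi\colon\mathcal{H}_g\to\T_{\pi(g)}(G/H)$ is an isomorphism, since $\mathcal{H}_g$ is a complement of $\V_g=\Ker(\diff\pi)$ in $\T_gG$; hence at each $(t,g)\in E$ there is a unique $v(t,g)\in\mathcal{H}_g$ with $\diff\pi(v(t,g))=\dot\gamma(t)$, and the pair $(1,v(t,g))$ is tangent to $E$. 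This defines a smooth vector field $\Xi$ on $E$ projecting to the unit field $\partial_t$ on $[0,1]$. A smooth lift $\tilde\gamma$ of $\gamma$ tangent to $\mathcal{H}$ everywhere is precisely an integral curve $t\mapsto(t,\tilde\gamma(t))$ of $\Xi$, so the existence, uniqueness and smooth-dependence theorems for ODEs give a unique such $\tilde\gamma$ with $\tilde\gamma(0)=z$, defined and smooth on a maximal subinterval $[0,\beta)\subseteq[0,1]$. This already settles uniqueness; it remains to prove $\beta=1$.

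Next I would rule out escape to infinity, exploiting the right-$H$-invariance built into the hypotheses. Since $\mathcal{H}$ and $\langle\,,\rangle_G$ are invariant under the right translations $R_h\colon g\mapsto gh$ for $h\in H$, while $\pi\circ R_h=\pi$, the isomorphisms $\diff\pi\colon\mathcal{H}_g\to\T_{\pi(g)}(G/H)$ attached to the various points of one fiber of $\pi$ differ by the isometries $\diff R_h$, so $\diff\pi$ carries $\langle\,,\rangle_G|_{\mathcal{H}}$ to a well-defined Riemannian metric $\langle\,,\rangle_{G/H}$ on $G/H$ for which $\diff\pi|_{\mathcal{H}}$ is a fiberwise isometry. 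Consequently $\|\dot{\tilde\gamma}(t)\|_G=\|\dot\gamma(t)\|_{G/H}$, a continuous and hence bounded function of $t\in[0,1]$, so $\tilde\gamma|_{[0,\beta)}$ has finite length in the Riemannian distance $d_G$ of $\langle\,,\rangle_G$ and is $d_G$-Cauchy as $t\to\beta^-$. Because the right translations $R_a\colon g\mapsto ga$, $a\in G$, are isometries of $\langle\,,\rangle_G$ and act transitively on $G$, the space $(G,d_G)$ is a homogeneous Riemannian manifold, hence complete (geodesics extend for a uniform time near one point, hence near every point, and Hopf--Rinow applies); therefore $\lim_{t\to\beta^-}\tilde\gamma(t)$ exists in $G$. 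If $\beta<1$, this limit point together with $\beta$ lies in the closed set $E$, and restarting the integral curve of $\Xi$ there would extend $\tilde\gamma$ beyond $\beta$, contradicting maximality; hence $\beta=1$, and a last routine application of the same argument, or extending $\gamma$ slightly past $t=1$, yields smoothness up to the endpoint.

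The hard part is the second step: since $H$ may be non-compact (for instance $H\cong\Ga$), nothing a priori stops the horizontal lift from running to infinity within a single fiber in finite time, and it is exactly the passage through the induced metric $\langle\,,\rangle_{G/H}$ together with the completeness of the homogeneous metric $\langle\,,\rangle_G$ that excludes this. Everything else is the standard ODE bookkeeping.
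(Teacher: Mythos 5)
Your proof is correct, but it takes a genuinely different route from the paper's. The paper argues locally and then globalizes by connectedness: near each point of $G/H$ it produces a submanifold $S \subset \pi^{-1}D$ tangent to $\mathcal{H}$ with $\pi|_S^{}$ a diffeomorphism onto $D$, translates it on the right by an element of $H$ so that it passes through the prescribed initial point, and then shows that the set of $t$ for which $\gamma|_{[0,t]}$ admits a horizontal lift is nonempty, open and closed in $(0,1]$. You instead encode horizontality as a smooth vector field on the fiber product $E = \{(t,g) : \pi(g) = \gamma(t)\}$, get existence, uniqueness and smoothness of a maximal solution from ODE theory, and rule out finite-time escape by noting that the $H$-invariance of $\mathcal{H}$ and $\langle\,,\rangle_G^{}$ makes $\pi$ a Riemannian submersion onto an induced metric on $G/H$, so the lift has speed $\|\dot\gamma\|$ and finite length, while the right-invariant metric on $G$ is homogeneous, hence complete by Hopf--Rinow. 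What your route buys: uniqueness is automatic, and you never need a local section of $\pi$ tangent to $\mathcal{H}$ at every point (a local integral manifold of $\mathcal{H}$, which is what the paper's step (i) invokes); your argument is the standard horizontal-lift theorem for a principal connection on $G \to G/H$ and works verbatim for any $H$-invariant horizontal complement coming from a right-invariant metric. What it costs: you import completeness of homogeneous Riemannian manifolds and you use right-invariance of $\langle\,,\rangle_G^{}$ under all of $G$ (which the paper's setup does provide), whereas the paper's open-and-closed argument needs only the local lifting property and $H$-invariance, and is more elementary in that respect. Your handling of the endpoint $t=1$ (extend $\gamma$ slightly, or run the flow on the manifold with boundary, where the field projects to $\partial_t$) is adequate.
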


\begin{proof}
(i) First we establish a local lifting property: for every point $x_0^{} \in G/H$ there exists an open neighbourhood $D$
such that, for every smooth path $\gamma \colon [0, 1] \to D$ starting at $x$, and for any point $z \in \pi^{-1}(x)$,
$\gamma$ admits a unique smooth lifting $\tilde{\gamma} \colon [0, 1] \to G$ starting at $z$, which is tangent to $\mathcal{H}$ at every point.
Indeed, we can choose $D$ so small that there exist a closed submanifold $S \subset \pi^{-1} D$,
which is tangent to $\mathcal{H}$ at every point, and such that $\pi |_S^{} \colon S \to D$ is a diffeomorphism.
We can find $h \in H$ such that $(\pi |_S^{})^{-1}(x)h = z$.
Then $\tilde{\gamma} = ((\pi |_S^{})^{-1} \circ \gamma)h$ is the required lifting of $\gamma$.

\medskip

\noindent
(ii) Consider now a smooth path $\gamma \colon [0, 1] \to G/H$ and a point $z \in \pi^{-1}(\gamma(0))$.
Consider the set $T$ of numbers $t \in (0, 1]$ such that $\gamma |_{[0, t]}$ has a smooth lifting $\tilde{\gamma}_t^{}$
starting at $z$, which is tangent to $\mathcal{H}$ at every point.
From step (i) we know that $T \neq \emptyset$.
We will show that $T$ is both open and closed in $(0, 1]$, so it is the whole interval.
Consider $t_0^{} \in T$. Choose an open neighbourhood $D$ of $\gamma(t_0^{})$ as in step (i).
There is $t \in (t_0^{}, 1]$ such that $\gamma([t_0^{}, t]) \subset D$.
Then $\tilde{\gamma}_{t_0^{}}^{}$ can be extended to a smooth lifting $\tilde{\gamma}_t^{}$,
where $\tilde{\gamma}_t^{} |_{[t_0^{}, t]}$ is the unique smooth lifting of $\gamma |_{[t_0^{}, t]}$, starting at $\tilde{\gamma}_{t_0^{}}^{}(t_0^{})$,
which is tangent to $\mathcal{H}$ at every point.
This shows that $T$ is open.
Assume now that $T$ contains an increasing sequence $\{ t_n^{} \}_{n \ge 1}^{}$ converging to $t \in (0, 1]$.
Choose an open neighbourhood $D$ of $\gamma(t)$ on which the local lifting property holds.
There is an index $n \ge 1$, such that $\gamma([t_n^{}, t]) \subset D$.
Then $\tilde{\gamma}_{t_n^{}}^{}$ can be extended to a lifting $\tilde{\gamma}_t^{}$,
where $\tilde{\gamma}_t^{} |_{[t_n^{}, t]}$ is the unique lifting of $\gamma |_{[t_n^{}, t]}$, starting at $\tilde{\gamma}_{t_n^{}}^{}(t_n^{})$,
which is tangent to $\mathcal{H}$ at every point.
Thus, $t \in T$.
This proves that $T$ is closed.
\end{proof}

\noindent
For the next lemma we recall the group $\U(n, \CC) \subset \mathrm{M}_n(\CC)$ of upper-triangular matrices with $1$ on the diagonal.
Its Lie algebra is $\mathfrak{n}(n, \CC) \subset \mathrm{M}_n(\CC)$, the space of upper-triangular matrices with zero on the diagonal.

\begin{lemma}
\label{contraction}
Let $G$ be a unipotent algebraic group over $\CC$.
There exists a contraction of $G_\eucl^{}$ to its neutral element, which preserves every algebraic subgroup of $G$.
\end{lemma}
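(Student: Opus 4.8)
The plan is to produce an \emph{algebraic} contraction, realized in logarithmic coordinates. First I would embed $G$ as a Zariski-closed subgroup of $\U(n, \CC)$ for some $n$: a unipotent linear algebraic group over $\CC$ has a faithful algebraic representation in which every element acts unipotently, and by Kolchin's theorem (the unipotent analogue of the Lie--Kolchin theorem) that representation may be conjugated so that $G$ becomes a Zariski-closed subgroup of $\U(n, \CC)$. From now on I regard $G$, and every algebraic subgroup $N$ of $G$, as Zariski-closed subgroups of $\U(n, \CC)$.

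Next I would set up the exponential and logarithm. For $X \in \mathfrak{n}(n, \CC)$ let $\exp(X) = \sum_{k \ge 0} X^k/k!$, and for $g \in \U(n, \CC)$ let $\log(g) = \sum_{k \ge 1} (-1)^{k+1}(g - I)^k/k$. Since $X$ and $g - I$ are nilpotent, both series are finite, so $\exp$ and $\log$ are mutually inverse morphisms of affine varieties between $\mathfrak{n}(n, \CC)$ and $\U(n, \CC)$; in particular they are continuous for the Euclidean topologies. For fixed $X$ the matrices $sX$ and $tX$ commute, so $t \mapsto \exp(tX)$ is a homomorphism of algebraic groups $\Ga \to \U(n, \CC)$.

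The crux is the following claim: if $N \le \U(n, \CC)$ is a Zariski-closed subgroup and $g \in N$, then $\exp\bigl(t \log(g)\bigr) \in N$ for every $t \in \CC$. To prove it, put $X = \log(g)$ and consider the morphism $\varphi \colon \Ga \to \U(n, \CC)$, $\varphi(t) = \exp(tX)$. For every integer $m$ one has $\varphi(m) = \varphi(1)^m = \exp(X)^m = g^m \in N$, so the Zariski-closed subset $\varphi^{-1}(N) \subseteq \Ga$ contains the infinite set $\mathbb{Z}$; as $\mathbb{Z}$ is Zariski dense in $\Ga$, we get $\varphi(\Ga) \subseteq N$, which is the claim.

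Finally, I would define $c \colon G \times [0, 1] \to G$ by $c(g, t) = \exp\bigl((1 - t) \log(g)\bigr)$. By the claim applied to $N = G$ this indeed takes values in $G$; it is continuous for the Euclidean topologies, being a composite of continuous maps; and $c(g, 0) = \exp(\log g) = g$, while $c(g, 1) = \exp(0) = e$ and $c(e, t) = e$ for all $t$, so $c$ is a contraction of $G_\eucl$ to its neutral element. For any algebraic subgroup $N \le G$ and any $g \in N$, the claim yields $c(g, t) = \exp\bigl((1 - t)\log(g)\bigr) \in N$ for all $t$, so $c$ restricts to a contraction of $N_\eucl$; hence $c$ preserves every algebraic subgroup of $G$. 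The conceptual heart is the claim, whose proof rests on the Zariski density of $\mathbb{Z}$ in $\Ga$; the only ingredient imported from structure theory is Kolchin's theorem in the first step, and the only place characteristic zero enters is the termination of the two series. I expect the first step to be the only part that needs to be pinned down with care; everything after it is routine bookkeeping.
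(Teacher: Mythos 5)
Your proof is correct and follows essentially the same route as the paper: embed $G$ in $\U(n, \CC)$, contract by scaling in the logarithm coordinates, the key point being that the one-parameter subgroup $\{ \exp(t \log g) \mid t \in \CC \}$ lies inside any algebraic subgroup containing $g$. The only difference is how that point is justified --- you use Zariski density of $\mathbb{Z}$ in $\Ga$, whereas the paper identifies $\exp(\CC \log g)$ as the algebraic subgroup generated by $g$ using that $\Ga$ has no nontrivial proper algebraic subgroups --- and both arguments are sound.
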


\begin{proof}
Since $G$ is isomorphic to an algebraic subgroup of $\U(n, \CC)$, for some $n \ge 1$,
it suffices to prove the lemma in the case when $G = \U(n, \CC)$.
Regarding $\mathfrak{n}(n, \CC)$ as the affine space $\mathbb{A}^{(n - 1)n/2}$, we notice that the maps
\[
\exp \colon \mathfrak{n}(n, \CC) \lra \U(n, \CC) \qquad \text{and} \qquad \log \colon \U(n, \CC) \lra \mathfrak{n}(n, \CC)
\]
are algebraic and inverse to each other, so they are isomorphisms of algebraic varieties.
Consider $u \in \U(n, \CC) \setminus \{ I_n \}$ and $x \in \mathfrak{n}(n, \CC)$ such that $\exp(x) = u$.
We claim that $\langle u \rangle = \{ \exp(c x) \mid c \in \CC \}$ is the algebraic subgroup of $\U(n, \CC)$ generated by $u$.
Indeed, $\langle u \rangle$ is an algebraic subgroup of $\U(n, \CC)$ isomorphic to $\Ga$.
The additive group over $\CC$ has only one non-trivial algebraic subgroup, namely itself.
The maps
\[
E \colon [0, 1] \times \mathfrak{n}(n, \CC) \lra \mathfrak{n}(n, \CC), \qquad E(t, x) = tx,
\]
and
\[
F = \exp \circ E \circ (\id \times \log) \colon [0, 1] \times \U(n, \CC) \lra \U(n, \CC)
\]
are contractions relative to the Euclidean topologies of $\mathfrak{n}(n, \CC)$ and $\U(n, \CC)$.
Since $E$ preserves every complex line in $\mathfrak{n}(n, \CC)$,
it follows that $F$ preserves any subgroup $\langle u \rangle$ of $\U(n, \CC)$.
For an algebraic subgroup $U \le \U(n, \CC)$ we have $U = \bigcup_{u \in U \setminus \{ I_n \}}^{} \langle u \rangle$.
Accordingly, $F$ preserves $U$.
\end{proof}

\begin{proposition}
\label{cohomology}
Let $G$ be a unipotent algebraic group over $\CC$.
Let $f \colon X \to Y$ be a smooth surjective morphism of smooth irreducible separated varieties of finite type over $\CC$.
Assume that $X$ admits an algebraic action of $G$ such that the fibers of $f$ are precisely the $G$-orbits.
Let $A$ be an abelian group.
Then, for all integers $p \ge 0$, $f$ induces an isomorphism of groups
\[
\HH^p(Y_\eucl^{}; A) \overset{\simeq}{\lra} \HH^p(X_\eucl^{}; A).
\]
\end{proposition}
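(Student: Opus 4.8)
The plan is to deduce the statement from Lemma~\ref{leray}. Since $X$ and $Y$ are smooth varieties over $\CC$, the spaces $X_\eucl^{}$ and $Y_\eucl^{}$ are differentiable manifolds, and being of finite type and separated over $\CC$ they are Hausdorff with a countable base; moreover $f$ has connected fibers (isomorphic to affine spaces, see below), so $f^{-1}B$ is connected for $B$ connected. Hence it suffices to exhibit a base $\B$ of $Y_\eucl^{}$ such that $f^{-1}B$ is acyclic for each $B\in\B$, and for this it is enough that every point $y_0^{}\in Y$ has arbitrarily small open neighbourhoods $B$ with $f^{-1}B$ weakly contractible.

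Fix $y_0^{}$ and a point $x_0^{}\in f^{-1}(y_0^{})$. Because $f$ is a smooth morphism, $f_\eucl^{}$ is a holomorphic submersion, so by the holomorphic implicit function theorem there is an open ball $B\ni y_0^{}$, biholomorphic to a Euclidean ball and hence contractible, together with a holomorphic section $\sigma\colon B\to X$ with $\sigma(y_0^{})=x_0^{}$. Consider the continuous surjection
\[
q\colon G_\eucl^{}\times B\lra f^{-1}(B)_\eucl^{},\qquad q(g,b)=g\cdot\sigma(b),
\]
which is surjective because each fiber of $f$ is a single $G$-orbit meeting $\sigma(B)$, and which satisfies $f\circ q=\operatorname{pr}_B^{}$. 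For $x'\in f^{-1}B$ with $b=f(x')$, the fiber $q^{-1}(x')$ is a coset of the isotropy group $\operatorname{Stab}_G^{}(\sigma(b))$, a closed algebraic subgroup of $G$, hence contractible by Lemma~\ref{contraction} (a coset is homeomorphic to the subgroup, which is itself unipotent). Granting that $q$ is a Serre fibration, it is then a weak homotopy equivalence, and since $G_\eucl^{}\times B$ is contractible — $G_\eucl^{}$ by Lemma~\ref{contraction} and $B$ being a ball — the space $f^{-1}B$ is weakly contractible, in particular acyclic for every coefficient group $A$. Lemma~\ref{leray} then yields the Proposition.

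It remains to prove that $q$ is a Serre fibration, which is the heart of the matter. Since the $B$-component of any lift is forced to be $b\circ(\text{given map})=f\circ(\text{given map})$, this amounts to a path- and homotopy-lifting statement: given a path $x'\colon[0,1]\to f^{-1}B$ and $g_0^{}\in G$ with $g_0^{}\cdot\sigma(f(x'(0)))=x'(0)$, one must find $g\colon[0,1]\to G$ with $g(0)=g_0^{}$ and $g(t)\cdot\sigma(f(x'(t)))=x'(t)$, depending continuously on parameters. This is carried out exactly along the lines of Lemma~\ref{lifting}: fix a right-invariant Riemannian metric on $G$, use it to equip the submersion $q$ with a horizontal distribution complementary to $\Ker(\diff q)$ — the latter being, fiberwise, the isotropy directions — and lift horizontally. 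Local liftability holds because $q$ is a submersion; the global extension, namely that a horizontal lift cannot escape to infinity in finite time, follows as in the proof of Lemma~\ref{lifting} from the fact that two lifts over the same point differ by an element of a connected isotropy subgroup, together with an open--closed argument on $[0,1]$. The delicate point — the main obstacle — is that the isotropy groups $\operatorname{Stab}_G^{}(\sigma(b))$ genuinely vary with $b\in B$, and need not even be pairwise conjugate (already for actions of $\Ga$-powers), so the horizontal distribution, the local normal form, and the open--closed argument all have to be made uniform over $b$; it is precisely this family version of the homogeneous-space lifting that forces one to use Lemma~\ref{lifting} in its stated generality, for an arbitrary closed subgroup. (Equivalently, one may instead argue that $f\colon f^{-1}B\to B$ is itself a Serre fibration whose fiber $f^{-1}(y_0^{})\cong G/\operatorname{Stab}_G^{}(x_0^{})$ is weakly contractible — by Lemma~\ref{lifting}, the long exact homotopy sequence, and Lemma~\ref{contraction} applied to $G$ and to the isotropy group — and conclude over the contractible base $B$ that $f^{-1}B$ is weakly contractible; the same uniformity issue reappears in proving that $f$ restricted over $B$ is a fibration.)
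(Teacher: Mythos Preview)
Your overall plan matches the paper's: invoke Lemma~\ref{leray} for $f$, reduce to showing $f^{-1}B$ is acyclic for small $B$, and study the surjection $q\colon G\times B\to f^{-1}B$, $(g,b)\mapsto g\cdot\sigma(b)$. The gap is in the claim that $q$ is a Serre fibration. You say this ``is carried out exactly along the lines of Lemma~\ref{lifting},'' but Lemma~\ref{lifting} treats a single quotient $G\to G/H$ with one fixed closed subgroup $H$. Its closedness step works because the local lifting property holds uniformly from \emph{every} point of a fiber: one right-translates a fixed local section $S$ by elements of $H$, and the horizontal distribution is preserved because both the vertical bundle and the metric are right-$H$-invariant. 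For a path in $f^{-1}B$ that crosses fibers, the relevant subgroup $G_{b(t)}$ varies with $t$; there is no single $H$ to translate by, and nothing in your argument guarantees that the local horizontal lift can be extended over a fixed neighbourhood independently of where in the (non-compact) fiber of $q$ one has arrived. You correctly flag this as ``the delicate point,'' but then resolve it by appeal to Lemma~\ref{lifting} ``in its stated generality, for an arbitrary closed subgroup'' --- which still means one subgroup at a time, not a family. The parenthetical alternative, that $f|_{f^{-1}B}$ is itself a fibration, has the same defect.

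The paper sidesteps this by applying Lemma~\ref{leray} a \emph{second} time, now to $q$, so that it suffices to show $q^{-1}D'$ is contractible for $D'$ in a base of $f^{-1}B$. Taking $D'$ inside a local product chart for the submersion $f$, with one factor $B_1$ parametrizing the base direction and the other $B_2$ the fiber direction, the paper lifts only paths that move purely in the $B_2$-direction and hence lie in a single fiber $f^{-1}(y)$; there Lemma~\ref{lifting} applies verbatim with the fixed subgroup $H=G_y$. These fiberwise lifts assemble into a diffeomorphism $q^{-1}D'\cong q^{-1}(\sigma(B))\times B_2$. The varying isotropy is then absorbed in one stroke: $q^{-1}(\sigma(B))=\{(g,y):g\in G_y\}$, and the contraction of Lemma~\ref{contraction} preserves every algebraic subgroup of $G$ \emph{simultaneously}, so it restricts to a contraction of this entire set onto $\{e\}\times B$. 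It is this simultaneous-preservation clause of Lemma~\ref{contraction}, not a family version of Lemma~\ref{lifting}, that handles the moving stabilizers --- and your sketch never invokes it in that role.
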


\begin{proof}
All topologies considered in this proof will be the Euclidean topologies.
All tangent spaces and differential maps will be the real tangent spaces and the real differential maps.

\medskip

\noindent
(i) By virtue of Lemma~\ref{leray}, it is enough to show that $Y$ has a base $\B$ of open sets
such that $f^{-1} B$ is acyclic for each $B \in \B$.
We define $\B$ to be the family of open subsets $B \subset Y$ satisfying the following properties:
there exists an open subset $D \subset f^{-1} B$,
there exist balls $B_1^{} = \Ball(o_1^{}, r_1^{})$ and $B_2^{} = \Ball(o_2^{}, r_2^{})$ in Euclidean space, centered at the origin,
and there exist diffeomorphisms $\psi \colon B_1^{} \times B_2^{} \to D$ and $\phi \colon B_1^{} \to B$
such that $\phi^{-1} \circ f \circ \psi = \pr_1^{}$.
Choose arbitrary points $y \in Y$ and $x \in f^{-1}(y)$.
By hypothesis, $f$ is submersive at $x$, so we may apply the constant rank theorem to deduce that
there exist open neighbourhoods $B$ of $y$ and $D$ of $x$ satisfying the above properties.
Any open subset of $B$ that is diffeomorphic to a ball lies in $\B$.
This proves that $\B$ is a base for the Euclidean topology of $Y$.

\medskip

\noindent
(ii) It remains to show that each $f^{-1} B$ is acyclic.
Note that $f |_D$ has a $C^\infty$ section $\xi \colon B \to D$,
where $\psi^{-1} \circ \xi \circ \phi$ is the map $b_1^{} \mapsto (b_1^{}, o_2^{})$.
Consider the surjective $C^\infty$-map
\[
\mu \colon G \times B \lra f^{-1} B, \qquad \mu (g, y) = g . \xi(y).
\]
We will prove that $f^{-1} B$ has a base $\B'$ of open sets such that $\mu^{-1} B'$ is contractible for every $B' \in \B'$.
Invoking again Lemma~\ref{leray}, we will deduce that $f^{-1} B$ and $G \times B$ have the same cohomology.
From Lemma~\ref{contraction} we know that $G$ is contractible, hence $G \times B$ is acyclic.
We will conclude that $f^{-1} B$ is acyclic.

Note that $\mu$ is $G$-equivariant for the action of $G$ on $G \times B$ by left multiplication on the first factor.
By hypothesis, for every $x \in f^{-1} B$ there is $g \in G$ such that $g . x = \xi(f(x))$.
We have reduced the problem to showing that, for every $y \in B$,
$\xi(y)$ has a base of open neighbourhoods $D'$ such that $\mu^{-1} D'$ is contractible.
We will show that any open set of the form $D' = \psi(B_1' \times B_2')$,
where $B_1' \subset B_1^{}$ is a ball centered at $\phi^{-1}(y)$ and $B_2' \subset B_2^{}$ is a ball centered at $o_2^{}$, enjoys this property.
Replacing $B_1'$ with $B_1^{}$ and $B_2'$ with $B_2^{}$, we reduce the problem to proving that $\mu^{-1} D$ is contractible.

\medskip

\noindent
(iii) Given $y \in B$ we denote by $G_y^{}$ the isotropy subgroup of $\xi(y)$.
The universal property of the geometric quotient map $\pi \colon G \to G/G_y^{}$
yields a bijective morphism of algebraic varieties $\upsilon \colon G/G_y^{} \to f^{-1}(y)$ such that $\upsilon(\pi(g)) = g . \xi(y)$.
Here $f^{-1}(y)$ is equipped with the induced reduced structure.
In point of fact, $f^{-1}(y)$ is smooth, because $f$ is smooth.
Invoking Zariski's Main Theorem, we deduce that $\upsilon$ is an isomorphism of algebraic varieties.

\medskip

\noindent
(iv) We claim that $\mu$ is submersive at every point $(g, y)$.
Write $\mu(g, y) = x$.
The restricted map $G \times \{ y \} \to f^{-1}(y)$, $(u, y) \mapsto u . \xi(y)$, is submersive
because the maps $\pi$ and $\upsilon$ from step (iii) are submersive.
The restricted map $\{ g \} \times B \to f^{-1} B$, $(g, v) \mapsto g . \xi(v)$, is a section of $f$,
hence its differential map at $y$ sends $\T_y Y$ surjectively onto a linear complement of $\T_x f^{-1}(y)$ inside $\T_x X$.

We define the vertical tangent bundle $\V = \Ker(\diff \mu) \subset \T(G \times B)$.
We choose a Riemannian scalar product $\langle \, , \rangle_G^{}$ on $\T G$ that is invariant under right-translation.
We choose any Riemannian scalar product $\langle \, , \rangle_B^{}$ on $\T B$.
We consider the Riemannian scalar product
$\langle \, , \rangle_{G \times B}^{} = \pr_1^* \langle \, , \rangle_G^{} + \pr_2^* \langle \, , \rangle_B^{}$ on $\T (G \times B)$.
We consider the orthogonal complement $\mathcal{H} \subset \T (G \times B)$ of $\V$ relative to $\langle \, , \rangle_{G \times B}^{}$.

\medskip

\noindent
(v) Applying Lemma~\ref{lifting} to the the canonical map $\pi \colon G \to G/G_y^{}$,
and taking into account the isomorphism $\upsilon$ from step (iii), we obtain the following lifting property.
For any point $y \in B$, for any smooth path $\gamma \colon [0, 1] \to f^{-1}(y)$ starting at $\gamma(0) = x$,
and for any point $z \in \mu^{-1}(x)$, there exists a unique smooth lifting $\tilde{\gamma} \colon [0, 1] \to G \times B$,
starting at $z$, that is tangent to $\mathcal{H}$ at every point.

\medskip

\noindent
(vi) We construct $C^\infty$ maps
\[
\eta \colon \mu^{-1}(\xi(B)) \times B_2^{} \lra \mu^{-1} D \qquad \text{and} \qquad \theta \colon \mu^{-1} D \lra \mu^{-1}(\xi(B)) \times B_2^{}
\]
as follows. Given $z \in \mu^{-1}(\xi(B))$ and $b_2^{} \in B_2^{}$, write $\mu(z) = x = \xi(y)$, $\phi^{-1}(y) = b_1^{}$,
and consider the smooth path $\gamma \colon [0, 1] \to D$, $\gamma(t) = \psi(b_1^{}, t b_2^{})$, starting at $x$.
Consider the lifting $\tilde{\gamma} \colon [0, 1] \to \mu^{-1} D$ of $\gamma$, starting at $z$, mentioned at step (v).
Set $\eta(z, b_2^{}) = \tilde{\gamma}(1)$.
Conversely, given $z' \in \mu^{-1} D$, write $\mu(z') = x'$, $\psi^{-1}(x') = (b_1', b_2')$,
and consider the smooth path $\delta \colon [0, 1] \to D$, $\delta(t) = \psi(b_1', (1 - t) b_2')$, starting at $x'$.
Consider the lifting $\tilde{\delta} \colon [0, 1] \to \mu^{-1} D$ of $\delta$, starting at $z'$, mentioned at step (v).
Set $\theta(z') = (\tilde{\delta}(1), b_2')$.
By construction, $\eta$ and $\theta$ are inverse to each other and $C^\infty$, so they are diffeomorphisms.

\medskip

\noindent
(vii) According to Lemma~\ref{contraction}, there exists a homotopy $F \colon [0, 1] \times G \to G$
between $\id_G^{}$ and the constant map $g \mapsto e$.
Here $e$ is the neutral element of $G$.
Moreover, if $H$ is an algebraic subgroup of $G$, then $F(t, - )$ maps $H$ to $H$, for all $t \in [0, 1]$.
Notice that
\[
\mu^{-1}(\xi(B)) = \{ (g, y) \in G \times B \mid g \in G_y^{} \}.
\]
Each $G_y^{}$ is an algebraic subgroup of $G$, hence $F \times \id_B^{}$ restricts to a continuous map
\[
[0, 1] \times \mu^{-1}(\xi(B)) \lra \mu^{-1}(\xi(B)),
\]
providing a homotopy between the identity map of $\mu^{-1}(\xi(B))$ and the map $(g, y) \mapsto (e, y)$.
Since $B$ is contractible, we deduce that $\mu^{-1}(\xi(B))$ is contractible.
Since $B_2^{}$ is contractible, and in view of the diffeomorphism $\eta$ from step (vi), we find that $\mu^{-1} D$ is contractible, as well.
As noted at step (ii), this concludes the proof of the proposition.
\end{proof}

\noindent
A finite family $\{ Y_i^{} \}_{i \in I}^{}$ of locally closed subvarieties of $Y$ is called a \emph{decomposition} of $Y$
if the canonical morphism $\bigsqcup_{i \in I}^{} Y_i^{} \to Y$ is a bijection.

\begin{lemma}
\label{flattening}
Let $f \colon X \to Y$ be a surjective morphism of algebraic varieties of finite type over $\CC$.
Assume that $X$ admits an algebraic action of a connected algebraic group $G$ such that,
for every $y \in Y$, $f^{-1}(y)$, regarded as a subset of $X$, is a $G$-orbit.
Then $Y$ admits a decomposition $\{ Y_i^{} \}_{i \in I}^{}$ into finitely many integral locally closed subvarieties
such that all morphisms $f_i^{} \colon X_i^{} \to Y_i^{}$ obtained from $f$ by base extension are flat
and such that all $X_i^{}$ are irreducible.
\end{lemma}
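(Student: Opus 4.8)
\emph{Overall plan.} I would argue by induction on $\dim Y$. First note that all the hypotheses are inherited under base change of $f$ along an arbitrary locally closed subvariety $Z \subseteq Y$: a preimage $f^{-1}(Z)$ is a union of fibres, hence $G$-stable, and each fibre of the restricted morphism is still a $G$-orbit. So it suffices to establish the following local statement: if $Y \ne \emptyset$, there is an integral locally closed subvariety $Y_0 \subseteq Y$, dense and open in an irreducible component of $Y_{\red}$ of top dimension, such that $f^{-1}(Y_0) \to Y_0$ is flat and $f^{-1}(Y_0)$ is irreducible. Granting this, pick one such $Y_0$ for each top-dimensional component of $Y_{\red}$; their union is open in $Y_{\red}$, and its complement $Y'$ is a closed subvariety of $Y_{\red}$ of strictly smaller dimension (it is contained in the lower-dimensional components, in the pairwise intersections of components, and in the proper closed sets $W \setminus Y_0$). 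Applying the induction hypothesis to $Y'$ and collecting finitely many pieces produces the desired decomposition. The base case $\dim Y = 0$ is immediate: $Y$ is finitely many reduced points, each fibre is a $G$-orbit, hence irreducible because $G$ is connected, and any morphism to a reduced point is flat.

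\emph{Producing a flat piece.} For the local statement, let $W$ be a top-dimensional irreducible component of $Y_{\red}$ and let $W^{\circ} = W \setminus \bigcup_{W' \ne W} W'$, an integral locally closed subvariety of $Y$. By Grothendieck's generic flatness theorem there is a dense open $U \subseteq W^{\circ}$ such that $f^{-1}(U) \to U$ is flat; since flatness is preserved under restriction to open subsets, I am free to shrink $U$ further in what follows.

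\emph{Making the total space irreducible (the crux).} Let $X_1, \dots, X_r$ be the irreducible components of $f^{-1}(U)$. By Chevalley's theorem each image $f(X_j)$ is constructible in $U$; the non-dense ones lie in a finite union of proper closed subsets of the irreducible variety $U$, so after shrinking $U$ I may assume every $X_j$ dominates $U$. Now a flat morphism of finite type between Noetherian schemes is open, hence each set $f\bigl(X_j \setminus \bigcup_{l \ne j} X_l\bigr)$ is open in $U$; it is moreover dense, since $X_j \setminus \bigcup_{l \ne j} X_l$ is dense in $X_j$ and $X_j$ dominates $U$. Let $V$ be the intersection of these $r$ dense open sets. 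Fix a closed point $y \in V$. Then $f^{-1}(y)$ is the union of the closed subsets $(X_1)_y, \dots, (X_r)_y$, and for every $l$ the fibre $f^{-1}(y)$ meets $X_l \setminus \bigcup_{l' \ne l} X_{l'}$. If $r \ge 2$ this forces $f^{-1}(y)$ to be the union of the two non-empty proper closed subsets $(X_1)_y$ and $(X_2)_y \cup \dots \cup (X_r)_y$, so $f^{-1}(y)$ is reducible; but $f^{-1}(y)$ is a $G$-orbit, hence irreducible, a contradiction. Therefore $r = 1$, i.e.\ $f^{-1}(V)$ is irreducible, and $Y_0 = V$ does the job.

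\emph{Main obstacle.} The delicate point is precisely the interaction between flatness---a scheme-theoretic property that can only be obtained generically, which is what forces the whole argument to be inductive---and irreducibility of the total space. Reconciling the two is the heart of the proof: one exploits the openness of flat finite-type morphisms together with Chevalley constructibility to spread out the situation, and then converts the irreducibility of the fibres (free of charge, since orbits of a connected group are irreducible) into irreducibility of $f^{-1}(V)$ via the elementary observation that a variety which is a union of two non-empty proper closed subsets is never a single orbit. Keeping careful track, in the inductive step, of which closed subsets get deleted---so as to verify that the dimension genuinely drops---is the other point requiring attention.
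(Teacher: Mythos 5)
Your argument is correct, but it follows a genuinely different route from the paper. The paper first invokes the flattening stratification of Huybrechts--Lehn (Lemma 2.1.6) to reduce at once to the case where $f$ is flat and $Y$ is reduced, and then inducts on $\dim X$, using the group action directly: $G$ permutes the irreducible components $Z_j$ of $X$, so each set $U_j = Z_j \setminus \bigcup_{k \neq j} Z_k$ is $G$-invariant, hence a union of fibers, hence of the form $f^{-1} V_j$ with $V_j = f(U_j)$ open (by openness of flat morphisms); these $V_j$ are the irreducible-total-space strata, and one inducts on the complement. You instead induct on $\dim Y$, obtain flatness stratum by stratum from generic flatness rather than from the flattening stratification, and derive irreducibility of the total space purely from irreducibility of the fibers, via Chevalley constructibility, openness of flat maps, and the observation that a fiber meeting each of the sets $X_l \setminus \bigcup_{l' \neq l} X_{l'}$ would be reducible if $r \ge 2$. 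The comparison: your proof uses the $G$-action only through ``orbits of a connected group are irreducible,'' so it in fact proves the more general statement that any surjective finite-type morphism with irreducible fibers admits such a decomposition, and it is more self-contained (no appeal to the flattening stratification); the paper's proof settles flatness in one stroke by quoting the stratification and then gets irreducibility more quickly, since $G$-invariance of the $U_j$ immediately exhibits them as preimages of open sets. Your bookkeeping of the inductive step (the deleted locus lies in lower-dimensional components, pairwise intersections, and proper closed subsets of top-dimensional components, so the dimension strictly drops) is sound; the only cosmetic slip is the phrase ``$r=1$, i.e.\ $f^{-1}(V)$ is irreducible'' --- $r=1$ gives irreducibility of $f^{-1}(U)$, and $f^{-1}(V)$ is then irreducible as a nonempty open subset of it --- which is immediate to repair.
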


\begin{proof}
We first perform a flattening stratification of $X$.
There exists a finite decomposition $\{ Y_i^{} \}_{i \in I}^{}$ of $Y$ into locally closed reduced subvarieties
such that all morphisms $f_i^{} \colon X_i^{} \to Y_i^{}$ induced by base extension are flat.
Consult \cite[Lemma 2.1.6]{huybrechts-lehn}.
It is enough to prove the lemma for each morphism $f_i^{}$
relative to the induced action of $G$ on $X_i^{} = X \times_Y^{} Y_i^{}$ by multiplication on the first component.
Thus, we may assume a priori that $f$ is flat and that $Y$ is reduced.

We perform induction on $\dim X$.
If $\dim X = 0$, then $Y = \{ y_1^{}, \dots, y_m^{} \}$ is a finite set of points
and the required decomposition is obtained by setting $Y_i^{} = \{ y_i^{} \}$, for $1 \le i \le m$.
Assume now that $\dim X > 0$.
Let $Z_1^{}, \ldots, Z_n^{}$ be the irreducible components of $X$.
Denote $U_j^{} = Z_j^{} \setminus \bigcup_{k \neq j}^{} Z_k^{}$.
Multiplication by an element of $G$ permutes the sets $U_j^{}$, hence $G . U_j^{} \subset U_1^{} \cup \dots \cup U_n^{}$.
But $G . U_j^{}$ is irreducible, hence $G . U_j^{} = U_j^{}$.
It follows that $U_j^{} = f^{-1} V_j^{}$, where $V_j^{} = f(U_j^{})$.
A flat morphism of schemes of finite type over a field is open, hence $V_j^{}$ is open.
Being irreducible and reduced, each $V_j^{}$ is an open integral subvariety of $Y$.
Consider the closed subvariety $Y' = Y \setminus \bigcup_{1 \le j \le n}^{} V_j^{}$ equipped with the induced reduced structure.
Applying the induction hypothesis to the morphism $f' \colon X' \to Y'$ obtained from $f$ by base extension,
yields a decomposition $\{ Y'_i \}_{i \in I}^{}$ of $Y'$, as in the lemma.
Then $\{ V_j^{} \}_{1 \le j \le n}^{} \cup \{ Y'_i \}_{i \in I}^{}$ is the required decomposition of $Y$.
\end{proof}

\noindent
For a morphism $f \colon X \to Y$ of algebraic varieties over $\CC$,
we denote by $f_\red^{} \colon X_\red^{} \to Y_\red^{}$ the induced morphism of reduced varieties.

\begin{lemma}
\label{smoothening}
Under the assumptions of Lemma~\ref{flattening}, we claim that $Y$ admits a decomposition $\{ Y_i^{} \}_{i \in I}^{}$
into finitely many integral smooth locally closed subvarieties satisfying the following properties:
\begin{enumerate}
\item[(i)]
all morphisms $f_i^{} \colon X_i^{} \to Y_i^{}$ obtained from $f$ by base extension are flat;
\item[(ii)]
all $X_i^{}$ are irreducible;
\item[(iii)]
all $(X_i)_\red^{}$ are smooth;
\item[(iv)]
all morphisms $(f_i^{})_\red^{} \colon (X_i^{})_\red^{} \to Y_i^{}$ are smooth.
\end{enumerate}
\end{lemma}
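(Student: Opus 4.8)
The plan is to strengthen the decomposition produced by Lemma~\ref{flattening}, arguing by induction on $\dim Y$. Applying Lemma~\ref{flattening} first, one gets a decomposition $\{Y_j\}_j$ of $Y$ into integral locally closed subvarieties with $f_j \colon X_j \to Y_j$ flat and $X_j$ irreducible; it suffices to refine each $Y_j$, since the resulting pieces and morphisms are again among the base extensions of $f$. If $\dim Y = 0$ each $Y_j$ is a reduced point, $(X_j)_\red$ is a single $G$-orbit---hence a smooth integral variety---and $(f_j)_\red$ maps it to a point, so $\{Y_j\}$ already works. If $\dim Y > 0$, the inductive step rests on the following claim: a flat surjective morphism $h \colon Z \to T$ with $T$ integral, $Z$ irreducible, and fibres equal to the $G$-orbits admits a dense open $U \subseteq T$ such that $U$ is smooth, $(Z \times_T U)_\red$ is smooth, and the reduced morphism $(Z \times_T U)_\red \to U$ is smooth. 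Applying this to each $f_j$, the open stratum $U_j$ satisfies (i)--(iv) for the base change $X_j \times_{Y_j} U_j \to U_j$ (flatness and irreducibility being inherited), while $Y_j \setminus U_j$ has dimension $< \dim Y$ and the morphism $X_j \times_{Y_j} (Y_j \setminus U_j) \to Y_j \setminus U_j$ again satisfies the hypotheses of Lemma~\ref{flattening}; the induction hypothesis then decomposes $Y_j \setminus U_j$, and the union over $j$ of these decompositions is the one sought.

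To prove the claim, replace $h$ by $g = h_\red \colon W \to T$, where $W = Z_\red$, so that $W$ is integral and $g$ is dominant. I would shrink $T$ to a dense open $U$ enjoying simultaneously the following three properties: $U$ lies in the smooth locus of $T$, which is dense because $T$ is integral over a field of characteristic zero; $g^{-1}(U) \to U$ is flat, by generic flatness; and every scheme-theoretic fibre $g^{-1}(y)$, $y \in U$, is geometrically reduced. For this last property one observes that the generic fibre of $g$ is a localization of the reduced scheme $W$, hence reduced, and being of finite type over the perfect field of rational functions of $T$ it is geometrically reduced; since the set of points of $T$ over which the fibre of $g$ is geometrically reduced is locally constructible (a constructibility theorem of Grothendieck for properties of fibres, EGA~IV, \S 9.7), it contains the generic point of $T$ and hence a dense open set.

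Over such a $U$ each fibre $g^{-1}(y)$ is reduced, and its underlying set is the orbit $h^{-1}(y)$, which, being an orbit of the connected group $G$, is a smooth integral subvariety; hence $g^{-1}(y)$ equals this smooth variety as a scheme and is in particular geometrically regular over its residue field $\CC$. Thus $g^{-1}(U) \to U$ is flat with geometrically regular fibres, hence smooth, and composition with the smooth structure morphism of $U$ shows $g^{-1}(U)$ is smooth over $\CC$. Finally, since $U$ is open in $T$, base change along $U \hookrightarrow T$ coincides with restriction to the open set $h^{-1}(U) \subseteq Z$, so that $g^{-1}(U) = (Z_\red)|_{h^{-1}(U)} = (Z \times_T U)_\red$ and $g|_{g^{-1}(U)}$ is the reduced morphism $(Z \times_T U)_\red \to U$; this establishes the claim.

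I expect the main obstacle to be the reducedness of the fibres of $f_\red$ after shrinking the base: a reduced, even smooth, total space can have non-reduced fibres over a smooth base, so this is not automatic, and it is exactly what forces each fibre to coincide with the (smooth) orbit over it. Once that is secured, the standard facts that a flat morphism with geometrically regular fibres is smooth and that a scheme smooth over a smooth base is smooth complete the argument; the remaining ingredients---generic flatness, constructibility of the geometrically-reduced locus, smoothness of orbits, and the dimension induction with its repeated appeals to Lemma~\ref{flattening}---are routine.
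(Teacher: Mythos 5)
Your argument is correct, but it reaches the crucial ``generic smoothness over each stratum'' step by a genuinely different mechanism than the paper. The paper inducts on $\dim X$: inside each stratum of the flattening decomposition it takes the regular locus of $(X_i^{})_\red^{}$, notes that it is $G$-invariant and hence of the form $f^{-1} V_i^{}$, applies generic smoothness in characteristic zero (\cite[Lemma III.10.5, Proposition III.10.4]{hartshorne}) to get a small open set of the \emph{source} where the differential is surjective, and then uses $G$-equivariance to propagate surjectivity of the differential over the whole saturated preimage $f_i^{-1} W_i^{}$; the complement of $W_i^{}$ is handled by induction. You instead induct on $\dim Y$, pass to the reduced morphism, and combine generic flatness with the constructibility of the locus of geometrically reduced fibres (the generic fibre being reduced over the perfect function field) to find a dense open of the base over which the closed fibres are reduced; a reduced fibre is then forced to coincide, as a scheme, with the orbit it carries, hence is smooth, and ``flat with smooth fibres'' plus smoothness of the base gives (iii) and (iv). This correctly isolates the real danger --- reducedness of the fibres of $(f_i^{})_\red^{}$ is not automatic even over a smooth base --- and exploits the group action only through smoothness and irreducibility of orbits, whereas the paper exploits it through invariance of the regular locus and equivariance of the differential; the price is an appeal to EGA-style constructibility and to the fibrewise smoothness criterion instead of staying within Hartshorne. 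One small patch you should make explicit: your constructibility argument gives geometric reducedness, and the orbit identification gives geometric regularity, only for fibres over \emph{closed} points of $U$; to conclude smoothness of $g^{-1}(U) \to U$ argue at closed points via ``flat at the point plus fibre smooth at the point implies smooth at the point'' and then use that the smooth locus is open and that schemes of finite type over $\CC$ are Jacobson, so an open set containing all closed points is everything. With that remark added, your proof is complete; the remaining bookkeeping (transitivity of base change, irreducibility of open subsets of the $X_i^{}$, termination of the dimension induction) is at the same level of rigour as the paper's own treatment.
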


\begin{proof}
We perform induction on $\dim X$.
If $\dim X = 0$, then $Y_\red^{} = \{ y_1^{}, \ldots, y_m^{} \}$ is a finite set of points
and the required decomposition is obtained by setting $Y_i^{} = \{ y_i^{} \}$, for $1 \le i \le m$.
Assume now that $\dim X > 0$.
We consider a decomposition $\{ Y_i^{} \}_{i \in I}^{}$ of $Y$ as in Lemma~\ref{flattening}.
Let $U_i^{} \subset (X_i^{})_\red^{}$ be the set of regular points.
Since $U_i^{}$ is $G$-invariant, $U_i^{} = f^{-1} V_i^{}$, where $V_i^{} = f(U_i^{})$.
A flat morphism of schemes of finite type over a field is open, hence $V_i^{}$ is open in $Y_i^{}$.
Choose an open non-empty and non-singular subset $V_i' \subset V_i$ and denote $U_i' = f_i^{-1} V_i'$.
Applying \cite[Lemma III.10.5]{hartshorne} to the surjective morphism $(f_i^{})_\red^{} \colon U_i' \to V_i'$,
we obtain an open subset $U_i'' \subset U_i'$ such that the restricted morphism $(f_i^{})_\red^{} \colon U_i'' \to V_i'$ is smooth.
According to \cite[Proposition III.10.4]{hartshorne}, for each $x \in U_i''$, the complex differential map
\[
\diff (f_i^{})_\red^{}(x) \colon \T_x^{} (X_i^{})_\red^{} \lra \T_{f(x)}^{} Y_i^{}
\]
is surjective.
The same is true for each $x \in G . U_i''$, because $(f_i^{})_\red^{}$ is $G$-equivariant.
Consider the open subset $W_i^{} = f_i^{}(U_i'') \subset Y_i^{}$.
By construction, $(f_i^{-1} W_i^{})_\red^{}$ is smooth and the restricted morphism
$(f_i^{})_\red^{} \colon (f_i^{-1} W_i^{})_\red^{} \to W_i^{}$ is smooth.

Consider the closed subvariety $Y_i' = Y_i^{} \setminus W_i^{}$ equipped with the induced reduced structure.
Let $f_i' \colon X_i' \to Y_i'$ be the morphism obtained from $f_i$ by base extension.
Applying the induction hypothesis to $f_i'$, we obtain a decomposition $\{ Y_{ij}^{} \}_{j \in J_i^{}}^{}$ of $Y_i'$, as in the lemma.
In conclusion, $\{ W_i^{} \}_{i \in I}^{} \cup \{ Y_{ij}^{} \mid i \in I, \, j \in J_i^{} \}$
constitutes a finite decomposition of $Y$ into integral smooth locally closed subvarieties, satisfying the properties from the lemma.
\end{proof}

\noindent
For an algebraic variety $X$ of finite type over $\CC$,
we denote by $\euler(X)$ the topological Euler characteristic of the Euclidean topology of $X$.

\begin{proposition}
\label{euler}
Let $G$ be a unipotent algebraic group over $\CC$.
Let $f \colon X \to Y$ be a surjective morphism of separated algebraic varieties of finite type over $\CC$.
Assume that $X$ admits an algebraic action of $G$ such that,
for every $y \in Y$, $f^{-1}(y)$, regarded as a subset of $X$, is a $G$-orbit.
Then $\euler(X) = \euler(Y)$.
\end{proposition}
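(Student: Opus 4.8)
The plan is to slice $Y$ into well-behaved pieces, apply Proposition~\ref{cohomology} on each piece, and reassemble using the additivity of the Euler characteristic. First I would apply Lemma~\ref{smoothening} to obtain a decomposition $\{Y_i^{}\}_{i\in I}^{}$ of $Y$ into finitely many integral smooth locally closed subvarieties, together with the induced morphisms $f_i^{}\colon X_i^{}\to Y_i^{}$, where $X_i^{}=X\times_Y^{}Y_i^{}$, satisfying properties (i)--(iv) of that lemma. Since $\bigsqcup_{i\in I}^{}Y_i^{}\to Y$ is a bijection and each $Y_i^{}\hookrightarrow Y$ is a locally closed immersion, the maps $X_i^{}\to X$ are locally closed immersions whose images $f^{-1}(Y_i^{})$ partition $X$; hence $\{X_i^{}\}_{i\in I}^{}$ is a decomposition of $X$. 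Each $X_i^{}=f^{-1}(Y_i^{})$ is $G$-invariant, because the fibers of $f$ are $G$-orbits, so the action of $G$ restricts to $X_i^{}$; and since $G$ is a smooth, hence geometrically reduced, variety over $\CC$, the product $G\times(X_i^{})_\red^{}$ is reduced, so the restricted action descends to an algebraic action of $G$ on $(X_i^{})_\red^{}$.

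Next I would check that, for each $i$, the reduced morphism $(f_i^{})_\red^{}\colon (X_i^{})_\red^{}\to Y_i^{}$ satisfies every hypothesis of Proposition~\ref{cohomology}: $(X_i^{})_\red^{}$ is smooth by (iii), it is irreducible because $X_i^{}$ is irreducible by (ii), and it is separated and of finite type over $\CC$; the variety $Y_i^{}$ is smooth, integral, separated and of finite type; the morphism $(f_i^{})_\red^{}$ is smooth by (iv) and surjective, being obtained from the surjection $f$ by base change and reduction; and for every $y\in Y_i^{}$ the set $\big((f_i^{})_\red^{}\big)^{-1}(y)$ coincides with $f^{-1}(y)$, which is a single $G$-orbit by hypothesis. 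Moreover, smoothness of $(f_i^{})_\red^{}$ forces its scheme-theoretic fibers to be smooth, hence reduced, so they carry exactly the structure used in the proof of Proposition~\ref{cohomology}. Applying that proposition with rational coefficients yields isomorphisms $\HH^p(Y_i^{};\mathbb{Q})\overset{\simeq}{\lra}\HH^p\big((X_i^{})_\red^{};\mathbb{Q}\big)$ for all $p\ge 0$; since the rational cohomology of a complex variety of finite type is finite-dimensional in each degree and vanishes in large degrees, this gives $\euler\big((X_i^{})_\red^{}\big)=\euler(Y_i^{})$.

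Finally I would assemble the pieces. The Euler characteristic of the Euclidean topology of a complex variety of finite type is unchanged under passing to the reduced variety, so $\euler(X_i^{})=\euler\big((X_i^{})_\red^{}\big)$, and it is additive over decompositions into locally closed subvarieties --- equivalently, it coincides with the compactly supported Euler characteristic, which is additive in the excision long exact sequence; this is classical. Hence
\[
\euler(X)=\sum_{i\in I}^{}\euler(X_i^{})=\sum_{i\in I}^{}\euler\big((X_i^{})_\red^{}\big)=\sum_{i\in I}^{}\euler(Y_i^{})=\euler(Y).
\]
The substantive work has already been carried out in Lemma~\ref{smoothening} and Proposition~\ref{cohomology}; the only step here demanding real attention is the transfer of hypotheses under the reductions of Lemma~\ref{smoothening} --- chiefly that the fibers of $(f_i^{})_\red^{}$ remain $G$-orbits and are reduced --- after which the additivity of $\euler$ over algebraic decompositions closes the argument.
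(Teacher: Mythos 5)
Your proposal is correct and follows essentially the same route as the paper: decompose $Y$ via Lemma~\ref{smoothening}, apply Proposition~\ref{cohomology} to the reduced morphisms $(f_i^{})_\red^{}$, and conclude by additivity of $\euler$ over decompositions into locally closed subvarieties. The extra details you supply --- the descent of the $G$-action to $(X_i^{})_\red^{}$, the verification of the hypotheses of Proposition~\ref{cohomology}, and the invariance of $\euler$ under reduction --- are points the paper leaves implicit, and they are handled correctly.
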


\begin{proof}
Let $\{ Y_i^{} \}_{i \in I}^{}$ be a finite decomposition of $Y$ into locally closed subvarieties, as in Lemma~\ref{smoothening}.
Note that $\{ X_i^{} = X \times_Y^{} Y_i^{} \}_{i \in I}^{}$ is a finite decomposition of $X$ into locally closed subvarieties.
The morphisms $(f_i^{})_\red^{} \colon (X_i^{})_\red^{} \to Y_i^{}$ satisfy the hypotheses of Proposition~\ref{cohomology},
hence the Euclidean topologies of $X_i^{}$ and $Y_i^{}$ have the same cohomology groups.
Accordingly, $\euler(X_i^{}) = \euler(Y_i^{})$.
Using the additivity of $\euler$, we calculate:
\[
\euler(X) = \sum_{i \in I} \euler(X_i^{}) = \sum_{i \in I} \euler(Y_i^{}) = \euler(Y). \qedhere
\]
\end{proof}

\noindent
The above proposition is reminiscent of the method of $U$-invariants, which we summarize below.
Let $X = \Spec R$ be an affine scheme over $\CC$.
Let $G$ be a reductive connected algebraic group over $\CC$ acting algebraically on $X$.
Let $U$ be a maximal unipotent subgroup of $G$.
Let $Y = \Spec R^U$.
Then:
\begin{enumerate}
\item[(i)]
$X$ is of finite type over $\CC$ if and only if $Y$ is of finite type over $\CC$;
\item[(ii)]
$X$ is reduced if and only if $Y$ is reduced;
\item[(iii)]
$X$ is integral if and only if $Y$ is integral;
\item[(iv)]
$X$ is normal if and only if $Y$ is normal.
\end{enumerate}
We refer to \cite[Theorem 1.3.1]{hyun}.
We denote by $f \colon X_\var^{} \to Y_\var^{}$ the induced morphism of algebraic varieties over $\CC$.
By virtue of Proposition~\ref{euler}, we can add a fifth property to the above:
\begin{enumerate}
\item[(v)]
Assume, in addition, that $X$ is of finite type over $\CC$, that $f$ is surjective and that the fibers of $f$ are precisely the $U$-orbits.
Then $\euler(X) = \euler(Y)$.
\end{enumerate}

\end{document}